\DeclareMathOperator*{\colim}{colim}
\begin{document}

\newcommand{\nc}{\newcommand}

\newcommand{\colvec}[2]{\left  ( \begin{array}{cc} #1  \\
     #2  \end{array} \right ) }

\newcommand{\Tr}{\,{\rm Tr}\,}
\newcommand{\End}{\,{\rm End}\,}
\newcommand{\Hom}{\,{\rm Hom}\,}

\newcommand{\Ker}{ \,{\rm Ker} \,}

\newcommand{\bla}{\phantom{bbbbb}}
\newcommand{\onebl}{\phantom{a} }
\newcommand{\eqdef}{\;\: {\stackrel{ {\rm def} }{=} } \;\:}
\newcommand{\sign}{\: {\rm sign}\: }
\newcommand{\sgn}{ \:{\rm sgn}\:}
\newcommand{\half}{ {\frac{1}{2} } }
\newcommand{\vol}{ \,{\rm vol}\, }

%
\newcommand{\br}[1]{\left[ #1 \right]}
\newcommand{\beq}{\begin{equation}}
\newcommand{\eeq}{\end{equation}}
\newcommand{\beqst}{\begin{equation*}}
\newcommand{\eeqst}{\end{equation*}}
\newcommand{\barr}{\begin{array}}
\newcommand{\earr}{\end{array}}
\newcommand{\beqar}{\begin{eqnarray}}
\newcommand{\eeqar}{\end{eqnarray}}
\newtheorem{theorem}{Theorem}[section]
\newtheorem{corollary}[theorem]{Corollary}
\newtheorem{lemma}[theorem]{Lemma}
\newtheorem{prop}[theorem]{Proposition}
\newtheorem{definition}[theorem]{Definition}
\newtheorem{remit}[theorem]{Remark}
\newtheorem{conjecture}[theorem]{Conjecture}
\newcommand{\matr}[4]{\left \lbrack \begin{array}{cc} #1 & #2 \\
     #3 & #4 \end{array} \right \rbrack}

\newenvironment{rem}{\begin{remit}\rm}{\end{remit}}
\newcommand{\quott}{/\!/}
\newcommand{\hkquott}{/\!/\!/\!/}

\newcommand{\aff}{{\mathbb A}}
\newcommand{\RR}{{\mathbb R}}
\newcommand{\CC}{{\mathbb C}}
\nc{\FF}{ {\mathbb F} } 
\newcommand{\ZZ}{{\mathbb Z}}
\newcommand{\PP}{ {\mathbb P} }
\newcommand{\QQ}{{\mathbb Q}}
\newcommand{\UU}{{\mathbb U}}

\newcommand{\cala}{{\mbox{$\mathcal A$}}}
\newcommand{\calb}{{\mbox{$\mathcal B$}}}
\newcommand{\calc}{{\mbox{$\mathcal C$}}}
\newcommand{\cald}{{\mbox{$\mathcal D$}}}
\newcommand{\cale}{{\mbox{$\mathcal E$}}}
\newcommand{\calf}{{\mbox{$\mathcal F$}}}
\newcommand{\calg}{{\mbox{$\mathcal G$}}}
\newcommand{\calh}{{\mbox{$\mathcal H$}}}
\newcommand{\cali}{{\mbox{$\mathcal I$}}}
\newcommand{\calj}{{\mbox{$\mathcal J$}}}
\newcommand{\calk}{{\mbox{$\mathcal K$}}}
\newcommand{\call}{{\mbox{$\mathcal L$}}}
\newcommand{\calm}{{\mbox{$\mathcal M$}}}
\newcommand{\caln}{{\mbox{$\mathcal N$}}}
\newcommand{\calo}{{\mbox{$\mathcal O$}}}
\newcommand{\calp}{{\mbox{$\mathcal P$}}}
\newcommand{\calq}{{\mbox{$\mathcal Q$}}}
\newcommand{\calr}{{\mbox{$\mathcal R$}}}
\newcommand{\cals}{{\mbox{$\mathcal S$}}}
\newcommand{\calt}{{\mbox{$\mathcal T$}}}
\newcommand{\calu}{{\mbox{$\mathcal U$}}}
\newcommand{\calv}{{\mbox{$\mathcal V$}}}
\newcommand{\calw}{{\mbox{$\mathcal W$}}}
\newcommand{\calx}{{\mbox{$\mathcal X$}}}
\newcommand{\caly}{{\mbox{$\mathcal Y$}}}
\newcommand{\calz}{{\mbox{$\mathcal Z$}}}

\newcommand{\ib}{   }


%

%

\def\a{\alpha}
\def\b{\beta}
\def\g{\gamma}
\def\d{\delta}
\def\e{\epsilon}
\def\z{\zeta}
\def\h{\eta}
\def\t{\theta}
\def\k{\kappa}
\def\l{\lambda}
\def\m{\mu}
\def\n{\nu}
\def\x{\xi}
\def\p{\pi}
\def\r{\rho}
\def\s{\sigma}
\def\ta{\tau}
\def\u{\upsilon}
\def\ph{\phi}
\def\c{\chi}
\def\ps{\psi}
\def\o{\omega}

\def\G{\Gamma}
\def\D{\Delta}
\def\T{\Theta}
\def\L{\Lambda}
\def\X{\Xi}
\def\P{\Pi}
\def\U{\Upsilon}
\def\Ph{\Phi}
\def\Ps{\Psi}
\def\O{\Omega}

\nc{\itwopi}{ { 2 \pi i} }


%


\newcommand{\cy}{C^\infty}
\nc{\su}{ {\frak{su} } }
\nc{\liet}{ \mathfrak{t} } 
\nc{\lieg}{\mathfrak{g} } 
\nc{\Lie}{\rm Lie}

\newcommand{\Ham}{H}
\newcommand{\dhab}{\Ham_a}
\newcommand{\dhbb}{\Ham_b}
\newcommand{\dhpb}{\Ham_p}
\newcommand{\dhqb}{\Ham_q}
\newcommand{\dhub}{\Ham_u}
\newcommand{\dhvb}{\Ham_v}
\newcommand{\dhxb}{\Ham_x}
\newcommand{\dhyb}{\Ham_y}

\newcommand{\dee}{\delta}

\newcommand{\calM}{{\mathcal M}}
\newcommand{\calO}{{\mathcal O}}
\newcommand{\calE}{{\mathcal E}}
\newcommand{\calG}{{\mathcal G}}
\newcommand{\calTR}{{\mathcal TR}}

\newcommand{\quot}{\mathop{\rm quot}\nolimits}
\newcommand{\diag}{\mathop{\rm diag}\nolimits}

\newcommand{\gradH}{\nabla \Ham}

\newcommand{\ga}{T_a}
\newcommand{\gp}{T_p}
\newcommand{\gq}{T_q}
\newcommand{\gu}{T_u}
\newcommand{\gv}{T_v}
\newcommand{\gx}{T_x}
\newcommand{\gy}{T_y}
\newcommand{\gb}{T_b}

\newcommand{\hab}{d_{\ga}\Ham}
\newcommand{\hbb}{d_{\gb}\Ham}
\newcommand{\hpb}{d_{\gp}\Ham}
\newcommand{\hqb}{d_{\gq}\Ham}
\newcommand{\hub}{d_{\gu}\Ham}
\newcommand{\hvb}{d_{\gv}\Ham}
\newcommand{\hxb}{d_{\gx}\Ham}
\newcommand{\hyb}{d_{\gy}\Ham}

\newcommand{\cross}{\times}

\newcommand{\npb}{\|\gp\|^2}
\newcommand{\nqb}{\|\gq\|^2}
\newcommand{\nub}{\|\gu\|^2}
\newcommand{\nvb}{\|\gv\|^2}
\newcommand{\nxb}{\|\gx\|^2}
\newcommand{\nyb}{\|\gy\|^2}

\newcommand{\ca}{c_{a}}
\newcommand{\cb}{c_{b}}

\newcommand{\bbZ}{\mathbb{Z}}
\newcommand{\srm}[1]{\stackrel{#1}{\maps}}
\newcommand{\maps}{\longrightarrow}
\def\co{\colon\thinspace}
\newcommand{\xmaps}{\xrightarrow}
\newcommand{\MSU}{\mathcal{M}_{SU}}
\newcommand{\MU}{\mathcal{M}_{U}}
\newcommand{\injects}{\hookrightarrow}
\newcommand{\heq}{\simeq}

\title[The prequantum line bundle]{The prequantum line bundle on the moduli space of flat $SU(N)$ connections on a Riemann surface and the homotopy of the large $N$ limit. }
\author{Lisa C. Jeffrey}\thanks{LJ was partially supported by a grant from NSERC}
\address{Department of Mathematics \\
University of Toronto \\ Toronto, Ontario \\ Canada}
\email{jeffrey@math.toronto.edu}
\author{Daniel A. Ramras}\thanks{DR was partially supported by a grant from the Simons Foundation (\#279007)}
\address{Department of Mathematical Sciences \\
IUPUI \\ Indianapolis, IN 46202 \\ USA}
\email{dramras@math.iupui.edu}
\author{Jonathan Weitsman}\thanks{JW was partially supported by NSF grant DMS-12/11819}
\address{Department of Mathematics \\
Northeastern University \\ Boston, MA 02115\\ USA}
\email{j.weitsman@neu.edu}

\keywords{prequantum line bundle, moduli space of flat connections}

\subjclass[2010]{53C05, 32L05, 58J28}

\begin{abstract}We show that the prequantum line bundle on the moduli
space of flat $SU(2)$ connections on a closed Riemann surface of positive genus has degree 1.  It then follows from work of Lawton and the second author that the classifying map for this line bundle induces a homotopy equivalence between the stable moduli space of flat $SU(n)$ connections, in the limit as $n$ tends to infinity, and $ \CC P^\infty$. Applications to the stable moduli space of flat unitary connections are also discussed. 
\end{abstract}

\maketitle

\section{Introduction}

\newcommand{\lineb}{\mathcal{L}}
\newcommand{\calA}{\mathcal{A}}
\newcommand{\cA}{\calA}
\newcommand{\gauge}{\mathcal{G}}
Let $G$ be a simply connected compact Lie group and let $\Sigma$ be a closed oriented
2-manifold of genus $g > 0$. In \cite{RSW} Ramadas, Singer and Weitsman construct 
a line bundle $\lineb$ over the moduli space of 
gauge equivalence classes of flat connections  $\calA_F(\Sigma)/\calG$
on a trivial $G$-bundle on $\Sigma.$  This bundle possesses a natural connection, whose curvature is a scalar multiple of Goldman's symplectic form.

The purpose of this paper is to compute the degree (that is, the first Chern class)
of the 
line bundle described in \cite{RSW} in the case $G=SU(2)$.
Our main theorem is
\begin{theorem} \label{thm1} The degree of the line bundle is 1.
\end{theorem}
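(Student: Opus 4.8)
\medskip
\noindent\textbf{Proof strategy.} Write $M=\calA_F(\Sigma)/\gauge$ for the moduli space of flat $SU(2)$ connections. The plan is to reduce Theorem~\ref{thm1} to the computation of a single integer and then to pin that integer down by a careful normalization analysis of the construction of \cite{RSW}. For $g\ge 2$ one has $H^2(M;\ZZ)\cong\ZZ$, with a preferred ample generator $\gamma$ (the first Chern class of the determinant line bundle; see Narasimhan--Ramanan, Dr\'ezet--Narasimhan, Thaddeus). Since the connection on $\lineb$ produced in \cite{RSW} has curvature a \emph{positive} multiple of Goldman's form $\omega$, and $[\omega]$ is a positive rational multiple of $\gamma$, one gets $c_1(\lineb)=d\,\gamma$ for a well-defined integer $d\ge 1$. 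Thus the theorem becomes the assertion $d=1$, and it suffices to evaluate $d=\langle c_1(\lineb),\sigma\rangle$ on any class $\sigma\in H_2(M;\ZZ)$ with $\langle\gamma,\sigma\rangle=1$. (The genus-$1$ case is different in character -- there $M$ is the ``pillowcase'' orbifold, with underlying space $S^2$ -- and I would treat it separately by the same scheme, reading ``degree'' as the first Chern class of the underlying honest line bundle on $S^2$.)

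Next I would produce an explicit test $2$-cycle. A convenient one comes from a one-holed-torus decomposition $\Sigma=\Sigma_{1,1}\cup_\partial\Sigma_{g-1,1}$: fixing the boundary holonomy at a small element $c_0\ne\pm I$, the relative moduli space of flat $SU(2)$ connections on $\Sigma_{1,1}$ with that boundary holonomy is a smooth $\CC P^1$, and gluing in a fixed irreducible flat connection on $\Sigma_{g-1,1}$ with matching boundary holonomy produces a holomorphic embedding $\sigma\colon\CC P^1\hookrightarrow M$ landing in the smooth locus; one then checks, via the universal adjoint bundle over $M\times\Sigma$, that $\langle\gamma,[\sigma]\rangle=1$. (For $g=1$ one uses the pillowcase itself; for $g=2$ one may cross-check against the Narasimhan--Ramanan identification $M\cong\CC P^3$, under which $\lineb$ should be $\calO(1)$ and $\sigma$ a line.) The Chern number is then $\langle c_1(\lineb),[\sigma]\rangle=\frac1{2\pi}\int_{\CC P^1}\sigma^*\Omega$, where $\Omega$ is the explicit curvature $2$-form obtained by differentiating the \cite{RSW} cocycle; equivalently, one may trivialize $\lineb$ over $\CC P^1\setminus\{\mathrm{pt}\}$ using that cocycle and read $d$ off the clutching function $S^1\to U(1)$, a computation internal to $SU(2)$ representation theory.

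The hard part is not the shape of this argument -- which forces $c_1(\lineb)$ to be a positive multiple of $\gamma$ no matter what -- but keeping every normalization exact, since only the precise constants distinguish $d=1$ from $d=2$ or $d=4$. Concretely one must: (i) fix the basic inner product on $\su(2)$ by the requirement that $\frac1{8\pi^2}\langle F\wedge F\rangle$ generate $H^4(BSU(2);\ZZ)$; (ii) differentiate the exponentiated Wess--Zumino--Witten/Chern--Simons cocycle of \cite{RSW} and check that the curvature $\Omega$ of $\lineb$ equals $\omega$ \emph{on the nose} in this normalization, with no spurious factor; and (iii) verify that in the same normalization $\frac1{2\pi}[\omega]=\gamma$ and that the test cycle $\sigma$ has $\omega$-area exactly $2\pi$. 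Steps (ii) and (iii) carry essentially all the work: the $SU(2)$-versus-$SO(3)$ discrepancy and the basic-inner-product-versus-trace-form discrepancy are exactly the places where a factor of $2$ or $4$ could enter, and the content of Theorem~\ref{thm1} is that, for the \cite{RSW} construction, these all cancel to leave $d=1$.
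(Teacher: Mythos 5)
There is a genuine gap here: your write-up is a strategy outline whose decisive step is explicitly deferred and never carried out. You reduce the theorem to the assertion $d=1$ for an integer $d=\langle c_1(\lineb),\sigma\rangle$, and you correctly observe that ``only the precise constants distinguish $d=1$ from $d=2$ or $d=4$'' and that steps (ii) and (iii) ``carry essentially all the work'' --- but those steps are exactly where the content of the theorem lives, and you do not perform them. Nothing in the proposal actually evaluates a curvature integral, a clutching function, or a holonomy against the normalization in \cite{RSW}, so as written the argument establishes at most that $c_1(\lineb)$ is some multiple of a generator (which already follows from $H^2\cong\ZZ$), not which multiple. There are also two softer problems with the scaffolding. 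First, you lean on identifying the generator $\gamma$ with the determinant-line-bundle generator of Narasimhan--Ramanan/Dr\'ezet--Narasimhan and on ampleness/positivity on $M$; but for $g\ge 2$ the moduli space is singular at reducibles, and the paper explicitly points out that it is \emph{not} known whether the \cite{RSW} bundle agrees with the Dr\'ezet--Narasimhan one --- the whole point of the paper's argument is to avoid that route. Second, the claim $\langle\gamma,[\sigma]\rangle=1$ for your glued-up $\CC P^1$ is asserted via ``the universal adjoint bundle'' but not checked, and it is vulnerable to precisely the adjoint-versus-fundamental factor of $4$ you are trying to control; so even the normalization of the test cycle is open.

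For comparison, the paper resolves the normalization issue by a short explicit computation in genus $1$: restrict $\lineb$ to the space of diagonal (maximal-torus-valued) constant-coefficient connections, descend to $T\times T$, compute $\int_{T\times T}\Omega'=-4\pi i$ directly from ${\rm Trace}(X^2)=-2$ with $X=\diag(i,-i)$, conclude by Chern--Weil that the bundle on $T\times T$ has degree $2$, and then divide by the degree-$2$ Weyl quotient $T\times T\to S^2$ to get degree $1$ on the pillowcase. The higher-genus case is then handled not by a test cycle inside $M$ but by pulling back along the map of moduli spaces induced by a degree-one collapse $\Sigma^g\to\Sigma^1$, together with $H^2(\calA_F^{SU(2)}(\Sigma^g)/\calG;\ZZ)\cong\ZZ$. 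If you want to salvage your approach, the minimal fix is to actually carry out your step (ii)--(iii) on a concrete cycle --- e.g.\ do the genus-$1$ pillowcase computation in full, which is both the easiest normalization check and, via the collapse map, already suffices for all $g$.
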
 

As we will explain, the second integral cohomology group of the moduli space is infinite cyclic, and the Theorem implies that the first Chern class of $\lineb$ is a generator.  There is in fact a preferred generator (depending on the orientation of $\Sigma$), which agrees with $c_1 (\lineb)$.

In view of Question 5.6 of \cite{LR}, Theorem~\ref{thm1} has the following
corollary:
\begin{corollary} Let $\Sigma$ be a closed oriented 2-manifold of genus $g > 0$. Let   $\calG_n$ be the gauge group of the trivial $SU(n)$--bundle on $\Sigma$, and let $\calA_F^{SU(n)}(\Sigma)$ denote the space of flat connections on this bundle.
The classifying maps for the line bundles $\lineb\to \calA_F^{SU(n)}(\Sigma)/\calG_n$ induce a 
homotopy equivalence $\colim_{n\to \infty} \calA_F^{SU(n)}(\Sigma)/\calG_n \simeq \CC P^\infty$.
\end{corollary}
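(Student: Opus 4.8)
The plan is to deduce the Corollary from Theorem~\ref{thm1} together with the structural results of \cite{LR}, which in turn rest on the computation of the deformation $K$-theory of surface groups. Write $M_n = \calA_F^{SU(n)}(\Sigma)/\calG_n$ and $X = \colim_{n\to\infty} M_n$, with the colimit taken along the stabilization maps $M_n \to M_{n+1}$ induced by the inclusions $SU(n)\hookrightarrow SU(n+1)$. Since the construction of \cite{RSW} is natural in the (simply connected) structure group, the line bundle $\lineb$ on $M_{n+1}$ restricts to $\lineb$ on $M_n$; these therefore assemble into a line bundle $\lineb_\infty$ on $X$, and the classifying maps $M_n\to\CC P^\infty$ induce a classifying map $c_\infty\colon X\to\CC P^\infty$ for $\lineb_\infty$. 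This $c_\infty$ is the map that must be shown to be a homotopy equivalence.

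The first step is to isolate the part of the statement not involving $\lineb$. By \cite{LR} the space $X$ is simply connected with $\pi_2(X)\cong\ZZ$ and $\pi_i(X)=0$ for $i\ne 2$; thus $X$ is a $K(\ZZ,2)$, so that $H^*(X;\ZZ)\cong\ZZ[u]$ with $u$ of degree $2$, and $X$ abstractly has the homotopy type of $\CC P^\infty$. (The same circle of ideas ensures $X$ has the homotopy type of a CW complex, so that $\lineb_\infty$ has a classifying map at all.) Since both $X$ and $\CC P^\infty$ have homotopy concentrated in degree $2$, the map $c_\infty$ is a homotopy equivalence precisely when it induces an isomorphism on $H^2(-;\ZZ)$; writing $x$ for the standard generator of $H^2(\CC P^\infty;\ZZ)$, this is exactly the assertion that $c_1(\lineb_\infty)=c_\infty^*(x)$ generates $H^2(X;\ZZ)\cong\ZZ$.

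To prove that $c_1(\lineb_\infty)$ is a generator I would restrict to the $n=2$ stage. Let $\iota\colon M_2\hookrightarrow X$ be the stabilization map. As recalled in the discussion preceding Theorem~\ref{thm1}, $H^2(M_2;\ZZ)\cong\ZZ$, and Theorem~\ref{thm1} says exactly that $c_1(\lineb)$ generates it. By the naturality noted above, $\iota^*c_1(\lineb_\infty)=c_1(\lineb)$. Now write $c_1(\lineb_\infty)=m\,u$ and $\iota^*(u)=k\,v$, where $v$ generates $H^2(M_2;\ZZ)$; then $c_1(\lineb)=\iota^*c_1(\lineb_\infty)=mk\,v$, and since $c_1(\lineb)$ is a generator we get $mk=\pm 1$, hence $m=\pm 1$. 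So $c_1(\lineb_\infty)$ generates $H^2(X;\ZZ)$, the map $c_\infty$ induces an isomorphism on $H^2(-;\ZZ)$ and hence (the cohomology rings being polynomial on these classes) on all of $H^*(-;\ZZ)$, and by Whitehead's theorem $c_\infty$ is a homotopy equivalence.

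I expect essentially all the difficulty to be external to this computation: it is \cite{LR}, building on Ramras's work, that supplies the identification of the homotopy type of $X$ with $\CC P^\infty$ --- equivalently, that $\pi_2(X)\cong\ZZ$ with all higher homotopy vanishing --- and this is what uses the deformation $K$-theory of surface groups and the connectivity of the stabilization maps. Granting that input, the only genuinely new ingredient is Theorem~\ref{thm1}, and it combines with \cite{LR} as above. The one point deserving a line of justification is the compatibility of the \cite{RSW} line bundles with stabilization, i.e.\ that $\lineb$ on $M_{n+1}$ restricts to $\lineb$ on $M_n$, which follows from the functoriality of the \cite{RSW} construction under the homomorphisms $SU(n)\hookrightarrow SU(n+1)$.
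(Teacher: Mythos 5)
Your proposal is correct, and its overall strategy --- reduce the claim to showing that $c_1(\lineb_\infty)$ generates $H^2$ of the colimit, using that both source and target are $K(\ZZ,2)$'s of the CW homotopy type, and then detect the generator by restricting to a finite stage where the degree is known --- is the same as the paper's (Theorem~\ref{SU}). The difference lies in which finite stage you restrict to. You pull back along the stabilization map $\calA^{SU(2)}_F(\Sigma)/\calG_2 \hookrightarrow \MSU(\Sigma)$ for the given genus-$g$ surface, which forces you to invoke Theorem~\ref{thm1} in full generality (all genera) together with Lemma~\ref{betti} (that $H^2$ of the genus-$g$, $SU(2)$ moduli space is infinite cyclic); your divisibility argument ($mk=\pm 1$) then shows simultaneously that the stabilization map is an $H^2$-isomorphism and that $c_1(\lineb_\infty)$ is a generator, without having to quote the former from \cite{LR}. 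The paper instead restricts along the map $\calA^{SU(2)}_F(T)/\calG_{SU(2)}(T) \to \MSU(\Sigma)$ induced by collapsing $\Sigma = \Sigma'\# T$ onto a genus-one summand, quotes \cite[Theorem 5.3]{LR} for the fact that this map is an $H^2$-isomorphism, and thereby needs only the genus-one case of Theorem~\ref{thm1}; the higher-genus case of Theorem~\ref{thm1} is then deduced from genus one separately in Section~\ref{g>1} by a pullback argument that also uses Lemma~\ref{betti}. So the two routes consume essentially the same external inputs in a different order, and both are valid; yours has the small advantage of a self-contained divisibility step, the paper's has the advantage of not needing the higher-genus degree computation for this corollary. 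One minor stylistic point: rather than passing through an isomorphism of full cohomology rings and Whitehead, it is slightly cleaner to note that an $H^2(-;\ZZ)$-isomorphism between simply connected spaces with $H_2\cong\ZZ$ induces an isomorphism on $\pi_2$, hence is a weak equivalence of $K(\ZZ,2)$'s, which Remark~\ref{CW} upgrades to a homotopy equivalence; but this is not a gap.
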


It was previously shown in \cite{LR} that the stable moduli space $\colim_{n\to \infty} \calA_F^{SU(n)}(\Sigma)/\calG_n \cong
\colim_{n\to \infty} \Hom(\pi_1 \Sigma, SU(n))/SU(n)$ is a $K(\bbZ, 2)$ space, and hence is homotopy equivalent to $\CC P^\infty$.  This corollary gives a geometric viewpoint on this homotopy equivalence.  In Section~\ref{conj}, we also obtain a geometric viewpoint on the homotopy equivalence $\colim_{n\to \infty} \Hom(\pi_1 \Sigma, U(n))/U(n) \simeq (S^1)^{2g} \cross \CC P^\infty$ from~\cite{R}.

Our computation of $c_1 (\lineb)$ in genus 1 (Section~\ref{g=1}) is similar to Kirk--Klassen~\cite[Theorem 2.1]{KK}.\footnote{Kirk and Klassen conclude that $c_1 (L) = -1$.  The discrepancy can be explained using the footnote regarding signs in Section~\ref{g=1} of the present article.}  For related work in the algebraic category, see Drezet--Narasimhan~\cite{DN}.

We remark that it would be interesting to extend this degree calculation to other simply connected compact Lie groups.

\vspace{.2in}

\noindent {\bf Acknowledgements:} The second author thanks Simon Donaldson for suggesting that the results of \cite{R} should be connected to Goldman's symplectic form.  Additionally, we thank Jacques Hurtubise for asking about the higher genus case considered in Section~\ref{g>1}, and the anonymous referees for helping to improve the exposition.

\section{The Chern-Simons line bundle}
Let $G$ be a simply connected, compact Lie group, equipped with a chosen faithful representation into $\textrm{GL}(n, \CC)$, and let  $\mathfrak{g}$ be the Lie algebra of $G$ (viewed as a subalgebra of $\mathfrak{gl}(n, \CC)$).
The space of connections on the trivial $G$--bundle over $\Sigma$ will be denoted by $\calA = \Omega^1 (\Sigma, \mathfrak{g})$, and the gauge group of this bundle will be denoted by  $\mathcal{G} = C^\infty (\Sigma,G)$.  

The line bundle from \cite{RSW} 
is defined using the Chern-Simons cocycle (\cite{RSW}, p. 411) 
$\Theta:  \calA  \times \calG \to \CC$ defined by
$$ \Theta(A, g) = \exp  i  (CS ({\bf A}^{\bf g}) -  CS({\bf A})  ). 
$$ 
The Chern-Simons functional $CS({\bf A})$ is defined by
$$CS({\bf A}) = \frac{1}{4 \pi} \int_N {\rm Trace} ({\bf A} d{\bf A}
 + \frac{2}{3} {\bf A}^3) $$
where $N$ is a 3-manifold with boundary $\Sigma$
and $g \in \mathcal{G} = C^\infty (\Sigma,G).$
We have chosen extensions $\bf A$ and $\bf g$ of $A$  and $g$ (respectively) over the bounding
3-manifold $N$ (the existence of $\bf g$ relies on simple connectivity of $G$).
It is shown in \cite{RSW} that the Chern-Simons cocycle $\Theta(A, g)$ is independent
of the choice of these extensions.
We define a line bundle $\lineb$ over $\mathcal{A_F}/\gauge$ as a 
$\gauge$-equivariant bundle over the space of flat
connections $\mathcal{A}_F$, where $g \in \gauge$ acts
on $\mathcal{A} \times \CC$ by 
$$ g: (A,z) \mapsto (A^g, \Theta(A, g) z). $$
The definition of $\lineb$ is
$$ \lineb = \mathcal{A}_F \times_{\mathcal{G}} \CC. $$

The symplectic form $\hat{\Omega}$ on $\mathcal{A}$ is defined by 
(see \cite{RSW}, p. 412):
\beq \label{sym}\hat{ \Omega}(a,b) = \frac{i}{2\pi} \int_\Sigma {\rm Trace} (a \wedge b) \eeq
for $a,b \in \Omega^1(\Sigma, \mathfrak{g})$.
Notice that on the affine space $\mathcal{A}$, the symplectic form
is a constant quadratic form;
 it does not depend on choosing a point in $\mathcal{A}. $

\section{Degree of the Chern-Simons line bundle in genus 1}$\label{g=1}$

Let $N$ be a three-manifold with boundary $\Sigma$.

The symplectic form on $\mathcal{A}$ from (\ref{sym}) descends to a 2--form
$\Omega$ on $\mathcal{A}_F/\gauge$ (the space of flat connections), which is 
symplectic when restricted to the subspace $\mathcal{A}_F^s \subset \mathcal{A}_F$ 
of irreducible flat connections.  

The authors of \cite{RSW} exhibit a unitary connection $\hat{\omega}$  on
the prequantum line bundle over $\mathcal{A}_F$:
\beq \label{hatom}\hat{\omega}(a)= \frac{i}{4 \pi} \int_\Sigma {\rm Trace} (A \wedge a) \eeq
whose curvature is $\hat{\Omega}$. 
This is done on p. 412 of \cite{RSW}.
The proof uses the fact that the derivative of the Chern-Simons function is
$$dCS_A(v) = \frac{1}{4 \pi} \left( \int_{N} 2 {\rm Trace} (v \wedge F_A) - \int_\Sigma
{\rm Trace} (A\wedge v)\right)$$
for $v \in T_A \mathcal{A} = \mathcal{A} = \Omega^1(N, \mathfrak{g})$.
This follows from a straightforward calculation using Stokes' theorem.
The above expression restricts on $\mathcal{A}_F$ to 
$$dCS_A(v) =  -\frac{1}{4 \pi} \int_\Sigma {\rm Trace} (A\wedge v)  = i \hat{\omega}(v) $$
 (recalling (\ref{hatom})).
It is shown on p. 412 of \cite{RSW} (second paragraph) that
$\hat{\omega}$ is the pullback of a connection $\omega$  on 
$\mathcal{A}_F \times_{\gauge} \CC.$
This is demonstrated by introducing a vertical vector field
$Y$ for the action of $\gauge$, and 
showing that $$i_Y \hat{\omega} = L_Y \hat{\omega}  = 0  $$
so $\hat{\omega}$ is  basic, and therefore descends
to a 1-form on $\mathcal{A}_F/\gauge$. 

For the rest of the section,  we restrict to $G=SU(2)$.
Let $x, y$ be the flat coordinates on the genus $1$
 surface (see the proof of Lemma \ref{lemone} for more details).
Inside the space $\cA$ we can consider the space $\mathcal{W}$ 
of all connections of the form $a\, dx
+ b \,dy$ where $a, b  \in{\rm  Lie}(T)$
and $T = \left\{ {\rm diag } ( \lambda, \lambda^{-1}) \,:\, \lambda \in S^1\right\}$ is the diagonal maximal torus of $SU(2)$.  Note that $ {\rm  Lie}(T) = \{xX \,:\, x\in \RR\}$, where $X = {\rm diag } (i,-i) \in su(2).$

Now $\mathcal{W}$ is a subspace of $\cA$ so the bundle $\lineb$
 restricts to $\mathcal{W}$ as a bundle with
connection.  This bundle
is invariant under that part of the gauge group that preserves $\mathcal{W}$.
  This consists of $ (\ZZ
\times \ZZ) \ltimes \ZZ_2.$
Here $(m,n) \in \ZZ \times \ZZ$ is identified with 
the gauge transformation $ (e^{ix}, e^{iy}) \mapsto 
e^{imx}  e^{iny} $, and
$ \ZZ_2 = \{\pm 1\}$ 
is the Weyl group of $SU(2)$.

Taking the quotient by $\ZZ \times \ZZ$ we get a bundle $\lineb'$ on  $T \times T = \mathcal{W}/ \left(\ZZ \times \ZZ\right)$
 with a connection $\omega'$.  We will show, via a direct computation (Lemma~\ref{lemone}), that the curvature $\Omega'$ of this connection has integral
equal to $-4 \pi i $, and using Chern-Weil theory we will be able to conclude that $\lineb'$ has degree 2, while $\lineb$ has degree 1.

The computation goes as follows.

\newcommand{\hatOm}{\hat{\Omega}}

\begin{lemma} \label{lemone}
We have  $$\int_{T\cross T} \Omega'   =  - 4 \pi i . $$
\end{lemma}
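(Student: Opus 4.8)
The plan is to compute the holonomy of the connection $\omega'$ around loops in $T \times T$ directly from the Chern-Simons cocycle, and then integrate the curvature $\Omega'$ over $T \times T$ using Stokes' theorem on the torus. First I would set up explicit flat coordinates: write $T = \{ \mathrm{diag}(e^{i\theta}, e^{-i\theta}) \}$ with $\mathrm{Lie}(T)$ spanned by a single generator $H$, so that a point of $\mathcal{W}$ is $A = a\, H\, dx + b\, H\, dy$ with $(a,b) \in \RR^2$, and the symplectic form on $\Sigma = T^2$ coming from (\ref{sym}) restricts to a constant $2$-form in $(a,b)$. Since the gauge transformation attached to $(m,n) \in \ZZ \times \ZZ$ sends $(a,b)$ to $(a + 2\pi m', b + 2\pi n')$ for the appropriate lattice normalization, the quotient $\mathcal{W}/(\ZZ\times\ZZ)$ is indeed $T \times T$, and the line bundle $\lineb'$ is the associated bundle $\mathcal{W} \times_{\ZZ^2} \CC$ with the $\ZZ^2$-action twisted by $\Theta$.

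The key computational step is to evaluate $\Theta(A, g_{(m,n)})$ on $\mathcal{W}$, i.e.\ to evaluate the difference $CS(\mathbf{A}^{\mathbf{g}}) - CS(\mathbf{A})$ for $A \in \mathcal{W}$ and $g_{(m,n)}$ the abelian gauge transformation $(e^{ix}, e^{iy}) \mapsto e^{i(mx+ny)}$ acting as an element of $T$. Because everything lies in the maximal torus, the $\mathbf{A}^3$ term in the Chern-Simons integrand drops out and the computation reduces to an abelian one: $CS$ becomes (a multiple of) $\int_N \mathbf{A}\, d\mathbf{A}$, and one gets a bilinear expression in $(a,b)$ and $(m,n)$. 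This is exactly the abelian Chern-Simons / theta-function computation, and it is parallel to Kirk--Klassen~\cite[Theorem 2.1]{KK}. From this one reads off the curvature $\Omega'$ of $\omega'$ on $T \times T$ as a constant multiple of $dx \wedge dy$ (or rather the area form on $T\times T$ in the $(a,b)$ coordinates), with the multiple pinned down by the explicit formula (\ref{hatom}) for $\hat\omega$, namely $\hat\omega(v) = \frac{i}{4\pi}\int_\Sigma \Tr(A \wedge v)$, whose differential gives $\hat\Omega/2$ up to the relevant constants; integrating over the fundamental domain of the $\ZZ \times \ZZ$ lattice then yields the value $-4\pi i$.

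Concretely, I would carry out the steps in this order: (1) fix the coordinates $x, y$ on $\Sigma$ and the normalization of $H \in \mathrm{Lie}(T)$ so that the $\ZZ \times \ZZ$ lattice in $\mathcal{W} \cong \RR^2$ is the standard integer lattice scaled appropriately; (2) restrict $\hat\omega$ from (\ref{hatom}) to $\mathcal{W}$ and compute $d\hat\omega = \hat\Omega|_{\mathcal{W}}$ as a constant $2$-form in the $(a,b)$ coordinates, using $\Tr(H^2) = $ the chosen normalization and $\int_\Sigma dx \wedge dy = 1$; (3) check via $i_Y\hat\omega = L_Y\hat\omega = 0$ (as on p.~412 of \cite{RSW}, applied to the abelian directions) that $\hat\omega$ descends to $\omega'$ on $T \times T$, so that $\Omega'$ is the descended constant form; (4) integrate $\Omega'$ over a fundamental domain for $\ZZ \times \ZZ$ acting on $\mathcal{W}$, i.e.\ over $T \times T$, obtaining $-4\pi i$. (A consistent treatment of orientation of $\Sigma$ and of the sign of $\Tr$ on $\su(2)$ is what fixes the sign; see the footnote in this section.)

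The main obstacle I anticipate is \emph{bookkeeping of constants and signs}: there are several conventions in play at once --- the factor $\frac{1}{4\pi}$ in $CS$, the factor $\frac{i}{2\pi}$ in $\hat\Omega$ versus $\frac{i}{4\pi}$ in $\hat\omega$, the normalization of the trace form on $\su(2)$, the identification of the lattice $\ZZ \times \ZZ$ with periods in $(a,b)$, and the orientation of $T^2$. Getting the precise value $-4\pi i$ (rather than, say, $\pm 2\pi i$ or $\pm 8\pi i$) hinges entirely on tracking these simultaneously; the underlying geometry (a flat connection on $T \times T$ with first Chern number $\pm 2$) is straightforward once the normalization is nailed down. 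Everything else --- vanishing of the cubic term, descent of $\hat\omega$, and the integration --- is routine.
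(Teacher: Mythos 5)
Your steps (2)--(4) are essentially the paper's proof: restrict the constant form $\hat{\Omega}$ of (\ref{sym}) to $\mathcal{W}$, observe that it descends to $T\cross T$, and integrate over a fundamental domain for $\ZZ\times\ZZ$. The paper does exactly this, taking $X=\diag(i,-i)$, so $\Tr(X^2)=-2$, identifying $\partial/\partial x$ and $\partial/\partial y$ with $X$, and evaluating $\frac{i}{2\pi}\cdot(-2)\cdot(2\pi)^2=-4\pi i$. Two remarks. First, the entire content of the lemma is the specific constant, and your write-up defers precisely that arithmetic ("the multiple pinned down by\dots", "bookkeeping of constants and signs"); to count as a proof you must actually fix $\Tr(X^2)=-2$ and the $[0,2\pi]^2$ fundamental domain and multiply the factors out, as above. (Your parenthetical that $d\hat\omega$ gives "$\hat\Omega/2$ up to the relevant constants" is a symptom: in fact $d\hat\omega=\hat\Omega$ exactly, the factor $2$ coming from antisymmetrization of the bilinear form $\frac{i}{4\pi}\int_\Sigma\Tr(A\wedge a)$.) Second, the first half of your plan --- computing the holonomy of $\omega'$ from the abelian Chern--Simons cocycle $\Theta(A,g_{(m,n)})$ \`a la Kirk--Klassen --- is not needed for this lemma and the paper does not use it; the curvature is already known to be the descent of $\hat\Omega$, so only the curvature integral matters here. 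That cocycle route would give an independent derivation of the degree of $\lineb'$, but it is a longer detour for the same number.
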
 

\begin{proof} 
As above, let
$X = {\rm diag } (i,-i) \in su(2).$
Then ${\rm Trace}(X^2) = -2. $  

Let $\Gamma$ be a fundamental domain for the action of 
$\ZZ \times \ZZ $ on ${\rm  Lie}(T) \oplus {\rm  Lie}(T)$.
Parametrize $\Gamma$ by $(x,y)\in [0, 2\pi] \times[0, 2\pi]$. 
Under the exponential map $\exp: {\rm Lie}(T) \to T$,
the vector fields
$\frac{\partial}{\partial x}$ and $\frac{\partial}{\partial y}$ on 
$\Gamma$ 
are  identified with the constant vector field 
$X$ on $T$,  so with the above formula (\ref{sym}) for $\hat{\Omega}$ we have that
$$\Omega'_{(x,y)} := \Omega'_{(x,y)}(\frac{\partial}{\partial x},\frac{\partial}{\partial y}) 
= \hat{\Omega}_{(x,y)} (\frac{\partial}{\partial x},\frac{\partial}{\partial y})  
= \frac{i}{2 \pi} \int_0^{2\pi}\int_0^{2\pi} {\rm Trace}(X^2) dx dy $$
$$ = -4 \pi i. $$ 
The value of $\Omega'_{(x,y)}$ is independent of $x$ and $y$, so the integral of $\Omega'$ over $T\cross T$ (integrating using an area form of total area 1) is also $-4\pi i$.
\end{proof}

\noindent {\bf Proof of Theorem \ref{thm1} in genus 1.} 
By Lemma~\ref{lemone}, the cohomology class $[\Omega']\in H^2 (T\cross T; \mathbb{R})$ associated to $\Omega'$ is $-4\pi i \alpha$, where 
$\alpha = \frac{1}{(2\pi)^2} [dx\wedge dy] \in H^2 (T\cross T; \mathbb{R})$ denotes the fundamental class.
By Chern--Weil theory\footnote{In~\cite[Appendix C]{MS}, the Chern--Weil formula for characteristic classes is stated without signs, because they use a version of Fubini's Theorem with signs~\cite[p. 304]{MS}.  Since we have integrated using the usual version of Fubini's Theorem, we need a sign in our formula for $c_1 (\lineb')$.}, we have 
$$[\Omega'] = -2\pi i \,c_1 (\lineb'),$$
so $c_1 (\lineb') =  \frac{2}{(2\pi)^2} [dx\wedge dy]= 2 \alpha$, and $\lineb'$ has degree two.

The generator of $W = \mathbb{Z}/2\mathbb{Z}$ acts on $T\cross T$ by complex conjugation on each factor, inducing 
a quotient map $f\co T\cross T \to (T\cross T)/W$, and we have a homeomorphism $(T\cross T)/W \cong S^2$.
Our bundle $\lineb'$ is $\ZZ_2$--equivariant, and it descends to the bundle $\lineb$ on $(T \times T)/ W$ (note here that if $(z,w)\in T\cross T$ is fixed by $W$, then the action of $W$ on the fiber of $\lineb'$ over $(z,w)$ is \emph{trivial}: this action is defined in terms of the cocycle $\Theta (A, g)$, which is zero whenever $g$ fixes $A$).  
So $f^*(\lineb) = \lineb'$, and hence 
$\deg (\lineb)  
=\frac{1}{\deg (f)} \deg (\lineb')$.  An elementary calculation (e.g. using a $\ZZ_2$--equivariant  CW complex structure on $T\cross T$) shows that $\deg(f) = 2$, completing the proof.
$\hfill \Box$

\vspace{.25in}

  The key point is that we have
computed the degree on $T\cross T$, which has a canonical smooth manifold structure, so we can use Chern-Weil theory.
The proof of Theorem~\ref{thm1} in higher genus will be given in Section~\ref{g>1}.

\section{The conjecture of Lawton and Ramras on the Chern-Simons line bundle}$\label{conj}$

Let $\Sigma$ be a closed oriented 2-manifold of genus $g>0$.  Let  $\calG_{SU(n)} = \calG_{SU(n)} (\Sigma)$ and $\calG_{U(n)} = \calG_{U(n)} (\Sigma)$ denote the gauge groups of the trivial $SU(n)$ and $U(n)$--bundles on $\Sigma$ (respectively), and let $\calA^{SU(n)}_F(\Sigma)$ and $\calA^{U(n)}_F(\Sigma)$ denote the spaces of flat $SU(n)$-- and $U(n)$--connections on these bundles.
Define
$$\mathcal{M}_U (\Sigma)= \colim_n  \calA^{U(n)}_F(\Sigma)/\calG_{U(n)} \textrm{ and } \mathcal{M}_{SU} (\Sigma) = \colim_n  \calA^{SU(n)}_F(\Sigma)/\calG_{SU(n)}.$$  
We refer to these as the \emph{stable moduli spaces} of flat unitary (or special unitary)  connections over $\Sigma$.  Let $\lineb_n  \to \calA^{SU(n)}_F(\Sigma)/\calG_{SU(n)}$ denote the prequantum line bundle.  As $n$ varies, these bundles are compatible with the inclusions $SU(n) \injects SU(n+1)$, and hence induce a line bundle $\lineb_\infty \to \MSU (\Sigma)$, which we call the stable prequantum line bundle.

The homotopy types of the stable moduli spaces were determined in \cite{LR, R}:
\begin{equation}\label{LRR}\MU (\Sigma) \heq  \CC P^\infty \times (S^1)^{2g}, \,\,\,\, \MSU(\Sigma) \heq \CC P^\infty.\end{equation}
These results are computational, and rely on the uniqueness of Eilenberg--MacLane spaces; that is, no explicit homotopy equivalences between these spaces have been constructed.
Here we  offer bundle-theoretic descriptions of these homotopy equivalences.

\begin{remit} $\label{CW}$ The proof that $\MU (\Sigma)$ and $\MSU(\Sigma)$
have the homotopy types stated above relies on an independent result showing that these spaces
have the homotopy types of CW complexes.  In the unitary case, this is proven in~\cite[Lemma 5.7]{R}, and the same argument works in the special unitary case.
\end{remit}

\begin{theorem}\label{SU} The classifying map
$$\MSU (\Sigma) \maps \CC P^\infty$$
for the stable prequantum line bundle $\lineb_\infty$  
is a homotopy equivalence.
\end{theorem}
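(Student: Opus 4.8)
The plan is to reduce the statement to the degree computation already in hand. By~\cite{LR} the stable moduli space $\MSU(\Sigma)$ is a $K(\ZZ,2)$, and by Remark~\ref{CW} it has the homotopy type of a CW complex; $\CC P^\infty$ is also a CW model for $K(\ZZ,2)$. Hence the classifying map $c\co\MSU(\Sigma)\maps\CC P^\infty$ of $\lineb_\infty$ will be a homotopy equivalence as soon as it is a weak equivalence, by Whitehead's theorem. Since both spaces are simply connected with homotopy concentrated in degree $2$, this amounts to showing that $c_*\co\pi_2(\MSU(\Sigma))\maps\pi_2(\CC P^\infty)$ is an isomorphism; both groups are infinite cyclic, and by the Hurewicz theorem and the universal coefficient theorem this is in turn equivalent to $c^*\co H^2(\CC P^\infty;\ZZ)\maps H^2(\MSU(\Sigma);\ZZ)$ being an isomorphism. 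As $c^*$ carries the standard generator of $H^2(\CC P^\infty;\ZZ)$ to $c_1(\lineb_\infty)$, the theorem reduces to the claim that $c_1(\lineb_\infty)$ generates $H^2(\MSU(\Sigma);\ZZ)\cong\ZZ$.

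To prove this, I would pull back along the stabilization map. Write $M_2=\calA^{SU(2)}_F(\Sigma)/\calG_{SU(2)}$ and let $j\co M_2\maps\MSU(\Sigma)$ be the canonical map into the colimit, induced by the block inclusions $SU(2)\injects SU(3)\injects\cdots$. Since the bundles $\lineb_n$ are compatible with these inclusions --- this is precisely how $\lineb_\infty$ is defined --- we have $j^*\lineb_\infty\cong\lineb_2$, and hence $j^*c_1(\lineb_\infty)=c_1(\lineb_2)$ in $H^2(M_2;\ZZ)$. Now $H^2(M_2;\ZZ)$ is infinite cyclic (as recalled before Theorem~\ref{thm1}), and by Theorem~\ref{thm1} the class $c_1(\lineb_2)$ is a generator of it. The homomorphism $j^*\co H^2(\MSU(\Sigma);\ZZ)\maps H^2(M_2;\ZZ)$ is thus a map $\ZZ\maps\ZZ$ sending $c_1(\lineb_\infty)$ to a generator; writing $c_1(\lineb_\infty)=m\cdot u$ for a generator $u$ of $H^2(\MSU(\Sigma);\ZZ)$, we conclude that $m$ times a generator of $\ZZ$ is a generator, forcing $m=\pm1$. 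Therefore $c_1(\lineb_\infty)$ generates $H^2(\MSU(\Sigma);\ZZ)$, which completes the argument by the first paragraph. The same reasoning proves the Corollary of the Introduction, since $\MSU(\Sigma)=\colim_n\calA^{SU(n)}_F(\Sigma)/\calG_{SU(n)}$.

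All of the geometric content sits in Theorem~\ref{thm1} and in~\cite{LR}; the remainder is formal homotopy theory. The one place that needs care is the naturality invoked in the second paragraph: one must check that the Chern--Simons cocycle $\Theta$ of Section~2, hence the prequantum line bundle, restricts compatibly under the block inclusions $\mathfrak{su}(n)\injects\mathfrak{su}(n+1)$, so that indeed $j^*\lineb_\infty\cong\lineb_2$ and not merely $j^*\lineb_\infty\cong\lineb_2^{\otimes k}$ for some $k\neq\pm1$ (the latter would not let us conclude that $j^*c_1(\lineb_\infty)$ is a generator). This follows from the behavior of the trace form and of the Chern--Simons functional under these inclusions and is already implicit in the construction of $\lineb_\infty$, so I expect it to be a minor point rather than a genuine obstacle; the real subtlety --- the degree-$1$ computation --- has already been dispatched in Theorem~\ref{thm1}.
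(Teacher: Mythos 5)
Your proof is correct, and it follows the same overall strategy as the paper (reduce to showing that $c_1(\lineb_\infty)$ generates $H^2(\MSU(\Sigma);\ZZ)$ by pulling back along a map to a rank-$2$ moduli space where the degree is known, then invoke the $K(\ZZ,2)$/CW/Whitehead formalism), but with a different choice of comparison map, and the difference matters for the paper's logical architecture. You pull back along the stabilization map $j\co \calA^{SU(2)}_F(\Sigma)/\calG_{SU(2)}\maps\MSU(\Sigma)$ for the \emph{same} genus-$g$ surface, which forces you to use the higher-genus case of Theorem~\ref{thm1} together with Lemma~\ref{betti}; those are only established in Section~\ref{g>1}, after Theorem~\ref{SU}, though there is no circularity since Section~\ref{g>1} does not rely on Section~\ref{conj}, so your argument is logically sound given the paper as a whole. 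The paper instead pulls back along the composite $\calA^{SU(2)}_F(T)/\calG_{SU(2)}(T)\maps\MSU(\Sigma)$ induced by the degree-one collapse $\Sigma\to\Sigma/\Sigma'\cong T$ and the block inclusions, citing \cite[Theorem 5.3]{LR} for the fact that this composite induces an isomorphism on $H^2$; this needs only the genus-$1$ computation of Section~\ref{g=1}. What your route buys is that you avoid the collapse map and the appeal to \cite[Theorem 5.3]{LR}, and moreover your divisibility argument is slightly more economical: you never need $j^*$ itself to be an isomorphism, only that it sends $c_1(\lineb_\infty)$ to a generator (one small wording fix: from $c_1(\lineb_\infty)=m\cdot u$ you get $j^*c_1(\lineb_\infty)=m\cdot j^*(u)$ with $j^*(u)$ merely \emph{some} integer, but since $m\cdot j^*(u)$ is a unit in $\ZZ$ the conclusion $m=\pm1$ still follows). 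What the paper's route buys is independence from the higher-genus degree computation. Your closing remark about checking compatibility of the Chern--Simons cocycle under the block inclusions $\mathfrak{su}(n)\injects\mathfrak{su}(n+1)$ is well taken, and as you suspect it is built into the paper's construction of $\lineb_\infty$.
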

\begin{proof}
Writing $\Sigma = \Sigma' \# T$, where $T$ is a torus,
the quotient map induces a homeomorphism $\Sigma \to \Sigma/\Sigma' \cong T$. Together with the inclusions $SU(2)\hookrightarrow SU(n)$, this induces a map  
$$\calA^{SU(2)}_F(T)/\calG_{SU(2)} (T)\maps \MSU (\Sigma),$$
which induces an isomorphism on $H^2 ( -; \ZZ)$ by \cite[Theorem 5.3]{LR}.
We have shown in the previous section that the classifying map for the bundle $\lineb_2  \to \calA^{SU(2)}_F(T)/\calG_{SU(2)} (T)$
induces an isomorphism on $H^2(- ; \bbZ)$.  Since the classifying map for $\lineb_\infty$ restricts to a classifying map for $\lineb_2$, 
we find that the classifying map for $\lineb_\infty$ must also induce an isomorphism on $H^2 (-; \bbZ)$.  But up to homotopy, this is a self-map of $\CC P^\infty$, and any self-map of $\CC P^\infty$ that induces an isomorphism on 
$H^2 (-; \bbZ)$ is a homotopy equivalence.
\end{proof}

We now turn to the unitary case.  Fix generators $\alpha_i$, $\beta_i$ ($i=1, \ldots, g$) for $\pi_1 (\Sigma)$ (with $\prod_i [\alpha_i, \beta_i] = 1$).  The determinant map
$$\det \co \MU(\Sigma) \to (S^1)^{2g}$$
is defined by 
$$[A] \mapsto (\det(\rho_A (\alpha_1)), \det(\rho_A (\beta_1)), \ldots, \det(\rho_A (\alpha_g)), \det(\rho_A (\beta_g))),$$
where $A\in \calA^{U(n)}_F(\Sigma)$ and $\rho_A \co \pi_1 (\Sigma) \to U(n)$ is its holonomy representation.

\begin{corollary} There is a line bundle $\lineb^U \to \MU(\Sigma)$
that restricts to $\lineb_\infty  \to \MSU(\Sigma)$, and if $\alpha$ is a classifying map for $\lineb^U$, then
the map
$$\MU(\Sigma) \xmaps{(\det, \alpha)}   (S^1)^{2g}\times \CC P^\infty$$
is a homotopy equivalence.
\end{corollary}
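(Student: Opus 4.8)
The plan is to build $\lineb^U$ by pulling back the stable prequantum line bundle along a natural retraction of $\MU(\Sigma)$ onto $\MSU(\Sigma)$, and then to verify the homotopy equivalence by checking isomorphisms on homotopy groups and invoking Whitehead's theorem, using Theorem~\ref{SU} together with the known homotopy types $\MU(\Sigma)\heq(S^1)^{2g}\cross\CC P^\infty$ and $\MSU(\Sigma)\heq\CC P^\infty$. For each $n$ the rule $\rho\mapsto\rho\oplus(\det\rho)^{-1}$ is conjugation invariant and lands in $SU(n+1)$ because $\det(\rho\oplus(\det\rho)^{-1})\equiv 1$, so it gives a continuous map $\Hom(\pi_1\Sigma,U(n))/U(n)\to\Hom(\pi_1\Sigma,SU(n+1))/SU(n+1)$; these maps fit into squares with the stabilization maps $U(n)\injects U(n+1)$ and $SU(n+1)\injects SU(n+2)$ which commute up to conjugation by a signed permutation matrix, hence commute after passing to colimits, and so assemble into a continuous map $q\co\MU(\Sigma)\to\MSU(\Sigma)$ (compare the stabilization arguments in \cite{R}). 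If $j\co\MSU(\Sigma)\to\MU(\Sigma)$ is the map induced by the inclusions $SU(n)\injects U(n)$, then $q\circ j=\mathrm{id}$, since for an $SU(n)$ representation $\sigma$ one has $\det\sigma\equiv 1$, so $q(j[\sigma])=[\sigma\oplus 1]=[\sigma]$ in the colimit. I would then define $\lineb^U:=q^*\lineb_\infty$; it restricts on $\MSU(\Sigma)$ to $j^*q^*\lineb_\infty=(q\circ j)^*\lineb_\infty=\lineb_\infty$, as required. Letting $\phi\co\MSU(\Sigma)\to\CC P^\infty$ be the classifying map of $\lineb_\infty$, which is a homotopy equivalence by Theorem~\ref{SU}, the composite $\alpha:=\phi\circ q$ is then a classifying map for $\lineb^U$ (and any other such map is homotopic to it).

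With this setup, the homotopy-equivalence claim reduces to homotopy groups: both $\MU(\Sigma)$ (by Remark~\ref{CW}) and $(S^1)^{2g}\cross\CC P^\infty$ have the homotopy type of a CW complex, so by Whitehead's theorem it suffices to show that $(\det,\alpha)$ induces isomorphisms on all homotopy groups. Since $\MU(\Sigma)\heq(S^1)^{2g}\cross\CC P^\infty$ we have $\pi_1(\MU(\Sigma))\cong\ZZ^{2g}$, $\pi_2(\MU(\Sigma))\cong\ZZ$, and $\pi_k(\MU(\Sigma))=0$ for $k\geq 3$, matching the target, so only $\pi_1$ and $\pi_2$ need attention.

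On $\pi_1$: since $\CC P^\infty$ is simply connected, $(\det,\alpha)_*$ is the homomorphism induced by $\det\co\MU(\Sigma)\to(S^1)^{2g}$. This map has a section, namely the canonical map $(S^1)^{2g}=\Hom(\pi_1\Sigma,U(1))/U(1)\to\MU(\Sigma)$ from the first stage of the colimit, on whose image $\det$ is the identity (the $1\times 1$ determinant); hence $\det_*$ is a surjective endomorphism of $\ZZ^{2g}$ and so an isomorphism. On $\pi_2$: since $\pi_2((S^1)^{2g})=0$, $(\det,\alpha)_*$ is the homomorphism induced by $\alpha=\phi\circ q$. Here $\phi_*$ is an isomorphism because $\phi$ is a homotopy equivalence, and $q_*\co\pi_2(\MU(\Sigma))\to\pi_2(\MSU(\Sigma))$ satisfies $q_*j_*=\mathrm{id}$; since both groups are infinite cyclic, $j_*$ is a split monomorphism $\ZZ\to\ZZ$, hence an isomorphism, and therefore so is $q_*$. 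Thus $\alpha_*$ is an isomorphism on $\pi_2$, and Whitehead's theorem finishes the argument.

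The step I expect to be the main obstacle is making the retraction $q$ rigorous: one must check that $\rho\mapsto\rho\oplus(\det\rho)^{-1}$ is well defined and continuous after passing to the colimit, the one genuine point being that $\rho\oplus(\det\rho)^{-1}\oplus 1$ and $\rho\oplus 1\oplus(\det\rho)^{-1}$ become conjugate by an element of the relevant special unitary group once an extra trivial summand is added (conjugate by a permutation matrix corrected by a diagonal sign), so that the squares with the stabilization maps commute in the limit. This is exactly the sort of bookkeeping already carried out in \cite{R}, and once $q$ is in hand the rest of the argument is formal. An alternative would be to try to construct $\lineb^U$ directly over $\calA_F^{U(n)}(\Sigma)/\calG_{U(n)}$ by a Chern--Simons--type cocycle, but this is delicate since $U(n)$, unlike $SU(n)$, is not simply connected, whereas the pullback construction above sidesteps the issue entirely.
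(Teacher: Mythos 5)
Your proof is correct, but it takes a genuinely different route to producing $\alpha$ and $\lineb^U$. The paper works entirely at the level of homotopy types: using $\MU\heq \CC P^\infty\times (S^1)^{2g}$ and the fact (from \cite[Theorem 5.3]{LR}) that the inclusion $i\co\MSU\to\MU$ induces an isomorphism on $\pi_2$, it observes that the first component $i_1$ of $i$ is a self-homotopy-equivalence of $\CC P^\infty$ and simply sets $\alpha = f\circ i_1^{-1}\circ p_1$, then defines $\lineb^U$ as the pullback of the universal bundle along this abstractly constructed $\alpha$. You instead build an explicit retraction $q\co\MU\to\MSU$ via $\rho\mapsto\rho\oplus(\det\rho)^{-1}$ and set $\lineb^U=q^*\lineb_\infty$; the bookkeeping you flag (conjugation-invariance, landing in $SU(n+1)$, and compatibility with stabilization up to conjugation by a signed permutation, which becomes an actual equality in the conjugation quotients) all checks out, and $q\circ j=\mathrm{id}$ holds in the colimit as you say. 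Your route buys a concrete, geometrically meaningful $\lineb^U$ whose restriction to $\MSU$ is $\lineb_\infty$ on the nose rather than up to homotopy of classifying maps, and it rederives the $\pi_2$-isomorphism for the inclusion from the retraction instead of quoting \cite[Theorem 5.3]{LR} for that point (though you still need the homotopy-type computations of \cite{LR,R} to know $\pi_2(\MU)\cong\ZZ$ and to kill the higher homotopy groups). The $\pi_1$ argument via the determinant splitting and the final appeal to Whitehead's theorem using Remark~\ref{CW} are the same as in the paper.
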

\begin{proof}   
 Let $f\co \MSU\srm{\heq} \CC P^\infty$ be a classifying map for  $\lineb_\infty$, and choose a homotopy equivalence $\phi\co \MU(\Sigma) \srm{\heq} \CC P^\infty \cross (S^1)^{2g}$, as in (\ref{LRR}).  Let 
$\CC P^\infty\cross (S^1)^{2g}\srm{p_1} \CC P^\infty$ be the the projection, and let 
 $i\co \MSU(\Sigma) \injects \MU(\Sigma)$ be the inclusion. By   \cite[Theorem 5.3]{LR}, $i$
 induces an isomorphism on $\pi_2$, 
so the composite
$$i_1 := p_1 \circ \phi \circ i \co \MSU(\Sigma) \maps  \CC P^\infty$$ 
induces an isomorphism on $\pi_2$, hence is a homotopy equivalence.  
Define 
$$\alpha :=  f \circ i_1^{-1} \circ p_1 \circ \phi \co \MU(\Sigma) \to \CC P^\infty,$$ where $i_1^{-1}$ is a homotopy inverse to $i_1$.  Then we have a homotopy commutative diagram
$$ \xymatrix{\MSU(\Sigma)  \ar@{^{(}->}[rr]^-i \ar[dr]_-f^-\heq & & \MU(\Sigma)  \ar[dl]^\alpha \\ & \CC P^\infty}$$
and we define $\lineb^U$ to be the pullback, under $\alpha$, of the universal bundle over $\CC P^\infty$.
It remains to show that $(\det, \alpha)$ is a homotopy equivalence. Since $\det$ is split by the inclusion of $(S^1)^{2g} \cong \Hom(\pi_1 (\Sigma), U(1))/U(1)$ into $\MU(\Sigma)$,
we see that on fundamental groups, $\det_*$ is a surjection between free abelian groups of rank $2g$, hence an isomorphism.  Since $\alpha_*$ is an isomorphism on $\pi_2$, the result follows from the Whitehead Theorem (and 
Remark~\ref{CW})   
\end{proof}

\section{The degree of the line bundle in higher genus}$\label{g>1}$

Let   $\lineb_g$ denote the prequantum line bundle on the moduli space $\calA_F^{SU(2)}(\Sigma^g)/\calG$ of flat connections on a trivial $SU(2)$--bundle over the genus $g$ surface $\Sigma^g$.
We now show that $\lineb_g$ has degree $1$ for every genus $g$ surface ($g>0$), not just $g=1$ (thereby completing the proof of Theorem~\ref{thm1}).  This statement is meaningful, since we have:

\begin{lemma}$\label{betti}$ For any $g\geq 1$, we have $H^2 (\calA_F^{SU(2)}(\Sigma^g)/\calG; \bbZ) \cong \bbZ$.
\end{lemma}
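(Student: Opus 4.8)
\medskip

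The plan is to identify the moduli space $\calA_F^{SU(2)}(\Sigma^g)/\calG$ with the character variety $\Hom(\pi_1\Sigma^g, SU(2))/SU(2)$ and then compute its second integral cohomology directly, using the explicit presentation $\pi_1 \Sigma^g = \langle a_1, b_1, \ldots, a_g, b_g \mid \prod_i [a_i,b_i] = 1\rangle$. First I would recall the standard fact (holonomy correspondence) that $\calA_F^{SU(2)}(\Sigma^g)/\calG \cong \Hom(\pi_1\Sigma^g, SU(2))/SU(2)$ as topological spaces; this lets us work entirely with the representation variety. The space $\Hom(\pi_1\Sigma^g, SU(2))$ sits inside $SU(2)^{2g}$ as the fiber over the identity of the commutator-product map $\mu\co SU(2)^{2g} \to SU(2)$, $\mu(A_1,B_1,\ldots) = \prod_i [A_i,B_i]$.

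\medskip

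The key computational input is the known structure of $\Hom(\pi_1\Sigma^g, SU(2))/SU(2)$: for $g \geq 2$ it is a connected space whose rational cohomology and integral cohomology in low degrees have been computed (e.g. via the work on Poincar\'e polynomials of these spaces, or by a Mayer--Vietoris / stratification argument splitting off the reducible locus). The reducible representations form the image of $\Hom(\pi_1\Sigma^g, U(1))/(\text{Weyl}) \cong (S^1)^{2g}/(\ZZ/2)$, and the irreducible part $\calA_F^s/\calG$ is a smooth symplectic manifold. I would compute $H^2$ by a Mayer--Vietoris argument on the decomposition of the character variety into a neighborhood of the reducible locus and the irreducible locus, or — more cleanly — invoke the known result that $\pi_1$ of the (connected) character variety is trivial and $H^2 \cong \ZZ$, which for $SU(2)$ follows from Goldman's and others' computations. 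In the genus $1$ case one argues separately: $\Hom(\pi_1 T, SU(2))/SU(2) \cong (S^1 \times S^1)/(\ZZ/2) \cong S^2$, so $H^2 \cong \ZZ$ directly, consistent with the computation in Section~\ref{g=1}.

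\medskip

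An alternative, and perhaps cleaner, route: use \cite[Theorem 5.3]{LR} (already cited in the excerpt), which asserts that the map $\calA^{SU(2)}_F(T)/\calG_{SU(2)}(T) \to \MSU(\Sigma)$ induced by collapsing $\Sigma' = \Sigma \setminus T$ is an isomorphism on $H^2(-;\ZZ)$, together with whatever stability statement in \cite{LR} compares $\calA^{SU(2)}_F(\Sigma^g)/\calG$ to the stable space on $H^2$. If \cite{LR} already shows that the inclusion $SU(2)\hookrightarrow SU(n)$ induces an $H^2$-isomorphism $\calA^{SU(2)}_F(\Sigma^g)/\calG \to \MSU(\Sigma)$ and that $H^2(\MSU(\Sigma);\ZZ) \cong \ZZ$ (which follows from $\MSU(\Sigma) \heq \CC P^\infty$), then combining these yields $H^2(\calA_F^{SU(2)}(\Sigma^g)/\calG;\ZZ) \cong \ZZ$ immediately. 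The main obstacle, then, is bookkeeping: making precise which genus-independence / stability statement from \cite{LR} applies at the level of $H^2$ for the non-stable space at a fixed $n=2$, as opposed to the stable colimit. I expect this is the step requiring the most care, since \cite{LR} is phrased largely in terms of the stable space; one may need to extract from its proof the fact that already for $SU(2)$ the genus-$g$ character variety has $H^2 \cong \ZZ$ generated by the class pulled back from the torus, which is exactly what is needed both here and in the subsequent higher-genus degree computation.
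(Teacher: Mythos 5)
Your first route is essentially the paper's argument: the paper cites \cite{LR} (and \cite{BLR}) for simple connectivity of $\calA_F^{SU(2)}(\Sigma^g)/\calG$ and the Cappell--Lee--Miller computation \cite{CLM} of its Poincar\'e polynomial for the statement that $H^2$ has rank~$1$. The one step you gloss over, and which the paper is careful about, is the passage from a rational Betti number to the integral statement: Poincar\'e polynomial computations only give $H^2(-;\QQ)\cong\QQ$, and to conclude $H^2(-;\ZZ)\cong\ZZ$ you must rule out torsion. You mention that $\pi_1$ is trivial but never use it for this purpose; the paper's point is precisely that $H_1=0$ forces $H^2(-;\ZZ)$ to be torsion-free by the Universal Coefficient Theorem. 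As written, your phrase ``invoke the known result that $\pi_1$ \ldots is trivial and $H^2\cong\ZZ$'' treats the conclusion of the lemma as one of the inputs, so you should replace it by the two separate citations plus the UCT step. Your proposed Mayer--Vietoris on the reducible/irreducible stratification is plausible but not carried out and is not what the paper does; your second route via \cite[Theorem 5.3]{LR} and stability is, as you yourself note, incomplete at exactly the point where an unstable $n=2$ statement is needed, and the paper does not take that route.
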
  
\begin{proof} 
In \cite{LR}, it was proven that $\calA_F^{SU(2)}(\Sigma^g)/\calG$ is simply connected (and a second proof of this fact was given in \cite{BLR}).  Now, triviality of $H_1 (\calA_F^{SU(2)}(\Sigma^g)/\calG; \bbZ)$ implies that $H^2 (\calA_F^{SU(2)}(\Sigma^g)/\calG; \bbZ)$ is torsion-free $($by the Universal Coefficient Theorem$)$, and a simple direct analysis of the Poincar\'{e} polynomial of $\calA_F^{SU(2)}(\Sigma^g)/\calG$, as determined by Cappell--Lee--Miller~\cite{CLM}, shows that $H^2 (\calA_F^{SU(2)}(\Sigma^g)/\calG; \bbZ)$ has rank 1.  
\end{proof}

A map $f\co \Sigma^g \to \Sigma^h$ 
induces a map 
$$f^\#\co \calA_F^{SU(2)}(\Sigma^h)/\calG \to \calA_F^{SU(2)}(\Sigma^g)/\calG,$$ 
and as noted in  \cite[Remark 3, p. 412]{RSW}, if $f$ has degree 1 then
 $(f^\#)^*(\lineb_g) = \lineb_h$.  This implies that 
 $$(f^\#)^* (c_1(\lineb_g)) = c_1 (\lineb_h),$$
  and taking $h = 1$ we find that 
  $(f^\#)^* (c_1(\lineb_g)) = c_1 (\lineb_1) = 1$  
  (by the result in Section~\ref{g=1}).  Now Lemma~\ref{betti} implies that $c_1(\lineb_g)$ is a generator of $H^2 (\calA_F^{SU(2)}(\Sigma^g)/\calG; \bbZ)$.  This completes the proof of Theorem~\ref{thm1}. $\hfill \Box$

 \begin{remit} The results in \cite{LR} in fact show that the map
 $$H^2 (\MSU(\Sigma^g); \bbZ) \maps H^2 (\MSU(\Sigma^1); \bbZ)$$
 is determined by
 $$f^*\co H^2 (\Sigma^1) \to H^2 (\Sigma^g).$$  Thus a choice of generator in 
 $H^2 (\calA_F^{SU(2)}(\Sigma^1)/\calG; \bbZ)$ and a choice of orientations on $\Sigma^1$ and $\Sigma^g$ give a choice of generator in 
$H^2 (\calA_F^{SU(2)}(\Sigma^g)/\calG; \bbZ)$, and the above discussion shows that this generator coincides with $c_1 (\lineb_g)$.
 \end{remit}

\end{document}